\title{On harmonic analysis of vector-valued signals}
\author{Stephen J. Sangwine\thanks{Stephen J. Sangwine is with the
        School of Computer Science and Electronic Engineering,
        University of Essex, Wivenhoe Park, Colchester, CO4 3SQ, UK.
        Email: \protect\url{sjs@essex.ac.uk}.}}
\providecommand*{\abbreviation}[1]{\textit{#1}}
\providecommand*{\etc}{\abbreviation{etc.}\xspace}
\providecommand*{\ie}{\abbreviation{i.e.}\xspace}
\newtheorem{theorem}{Theorem}
\newtheorem{definition}{Definition}
\renewcommand{\vector}[1]{\ensuremath{\boldsymbol{#1}}\xspace}
\newcommand{\inner}[2]{\ensuremath{\left\langle #1, #2\right\rangle}\xspace}
\newcommand{\norm}[1]{\ensuremath{\left\lVert#1\right\rVert}\xspace}
\newcommand{\modulus}[1]{\ensuremath{\left\lvert#1\right\rvert}\xspace}
\newcommand*{\R}{\ensuremath{\mathbb{R}}\xspace}
\newcommand{\I}{\ensuremath{i}\xspace}
\begin{document}
\maketitle

\begin{abstract}
A vector-valued signal in $N$ dimensions is a signal whose value
at any time instant
is an $N$-dimensional vector, that is, an element of $\R^N$.
The sum of an arbitrary number of such signals \emph{of the same frequency} is shown
to trace an ellipse in $N$-dimensional space, that is, to be \emph{confined to a plane}.
The parameters of the ellipse (major and minor axes, represented by $N$-dimensional vectors;
and phase) are obtained algebraically in terms of the directions of oscillation of the constituent
signals, and their phases.
It is shown that the major axis of the ellipse can always be determined algebraically.
That is, a vector, whose value can be computed algebraically
(without decisions or comparisons of magnitude)
from parameters of the constituent signals,
always represents the major axis of the ellipse.
The ramifications of this result for the processing and Fourier analysis of signals with vector
values or samples are discussed, with reference to the definition of Fourier
transforms, particularly discrete Fourier transforms, such as have been defined
in several hypercomplex algebras, including Clifford algebras.
The treatment in the paper, however, is entirely based on signals with values in
$\R^N$.
Although the paper is written in terms of vector \emph{signals}
(which are taken to include images and volumetric images),
the analysis clearly also applies to a superposition of
simple harmonic motions in $N$ dimensions.
\end{abstract}

\section{Introduction}
This paper is concerned with the analysis of vector-valued signals,
in particular,
it is concerned with harmonic analysis of such signals in terms of sinusoidally
varying frequency components.
The paper employs straightforward mathematics,
using $N$-dimensional vectors,
and the concepts of \emph{norm} and \emph{inner product},
and elementary trigonometric functions.
The results presented are not claimed to be profound,
nevertheless, they do not appear to be available in the literature,
to the knowledge of the author.
\subsection{Vector-valued signals}
A real-valued signal $f(t)$ has a value at each instant in time, $t$,
which is an element of \R, the set of real numbers.
In practice, such a signal may be discretised in time
and in amplitude (this is said to be a digital signal).
In this paper, we consider the mathematics of signals without concern
for discretisation in time or amplitude, but it should be understood that the
theory presented here, although expressed in terms of
continuous time/continuous amplitude signals,
is not invalidated by discretisation.

A vector-valued signal has values at each time instant which are $N$-dimen\-sional
vectors, that is elements of $\R^N$.
We place no restriction on $N$, which can be any positive integer
(in the case $N=1$ of course, the results reduce to the classical
case of real-valued signals, and for $N=2$, the results are valid
for signals with complex values, although we express them in the
paper using vectors in $\R^2$).

Although we express the results in the paper in terms of signals which
are functions of time (for simplicity) the results are, of course, valid for signals
which are functions of some other variable, and in particular, the
results may be simply extended to images
(or volumetric images)
with pixels
(or voxels)
which have values in $\R^N$.

At the risk of overstressing the point, we emphasise that the \emph{values}
of the signal are vectors.
The signal itself is a time-series of vector values.
We are not discussing in this paper vectors containing a whole signal,
nor by the dimensionality $N$ do we mean the number of dimensions
of the signal (1 for a time series, 2 for images, 3 for volumetric images, \etc).
This point was discussed at greater length in \cite{10.1109/TSP.2003.812734}
to which we refer the reader (see particularly Table 1, where the concept
of vector as presented in the current paper corresponds to the column on
the left of the table, labelled `Components per sample').
\subsection{Frequency components}
The central concern of this paper is to understand the meaning of the
concept of a \emph{frequency component} of a vector-valued signal.
The classical theory of Fourier analysis applied to real-valued signals
expresses a signal in terms of a summation of sinusoidal signals
of various frequencies which
are scaled in amplitude and shifted in time (phase shifted).
(We are omitting here, for simplicity,
the differences between Fourier series,
Fourier integrals or transforms, and discrete Fourier transforms,
because these differences are not relevant to the discussion that
follows.)
In the case of signals with real values,
Fourier analysis represents a signal by a sum of scaled and shifted
pairs of complex conjugate exponentials
(representing positive and negative frequencies).
The sum of these scaled and shifted exponentials reconstructs the signal
which has been analysed, the imaginary parts cancelling out to yield a
real result.
Once we move from a real-valued signal to a complex-valued signal,
the picture becomes more complicated, and less well presented in
the literature.
However, the same idea applies in this case, except that the pair
of complex exponentials are no longer complex conjugates,
and when added,
their imaginary parts do not cancel out.
The results in this paper show that it is not difficult
to understand how a complex signal may be represented in terms of
frequency components.
Beyond $N=2$, the picture is less clear.

We show that the concept of a \emph{frequency component} can be
simply understood in terms of an elliptical path through the space
of the $N$-dimensional signal values traversed $\nu$ times per second,
where $\nu$ is the frequency of the component in hertz.
The ellipses are composed of the sum of a cosinusoidal and a sinusoidal
oscillation \emph{in a plane}, regardless of the value of $N$
(excluding $N=1$ of course).
A canonic decomposition of this ellipse into \emph{orthogonal} cosinusoidal
and sinusoidal components is given.
This seems to the author a remarkable result, because it means that
the geometric interpretation of the concept of \emph{frequency component}
does not change as one increases the dimensionality of the signal values.
This in turn means that harmonic analysis into sinusoidal frequency
components is essentially of the same character regardless of $N$,
and this has significant ramifications for the construction of Fourier transforms
of vector-valued signals,
which we discuss in \S\,\ref{sec:ramifications} of the paper.
\subsection{Polarization}
A signal with samples in two or more dimensions may be \emph{polarized}.
The concept of polarization is well-known in physics,
particularly for electromagnetic waves including light,
and seismic waves propagating through rock.
For a discussion of signal polarization
(for the complex, or two-dimensional, case only),
see \cite{10.1016/j.sigpro.2006.03.019,10.1109/TSP.2008.925961}.

It will become clear in this paper that every frequency component
of a vector signal as discussed in this paper is polarized,
because it is confined to a plane in the $N$ dimensional space of the signal values.
Special cases are \emph{linear polarization}
(the signal values oscillate along a line in $N$-dimensional space);
\emph{circular polarization}
(the signal values oscillate around a circular path in $N$-dimensional space);
\emph{elliptical polarization}
(the general case: the signal values oscillate around an elliptical path in $N$-dimensional space).
In the second and third cases, there is also a direction of polarization
(the sense in which the values traverse the circle/ellipse: clockwise or anti-clockwise).

The question of whether a \emph{signal}
(as opposed to a \emph{single frequency component} of a signal)
is polarized is outside the scope of this paper.
\subsection{Vector sensors}
A vector signal may be captured by a vector sensor,
that is a sensor with the ability to capture orthogonal components
of an incident wave.
Examples include vector geophones,
3-axis accelerometers, and gyroscopes.
Since the physical world is 3-dimensional,
vector signals with 3-dimensions are more common
than those with higher dimensions.
Notice that we do not consider waves: once a wave impinges on a vector
sensor, information about the wave is reduced to oscillation in $N$
dimensions as detected by the sensor.
To capture more information from the wave, for example to permit estimation
of the direction of arrival, it is necessary to utilise an array of sensors,
which is outside the scope of this paper.
\subsection{Prior work}
The ideas presented in this paper have been developed from some prior
work along similar lines for the case of signals with complex values.
Research by Andrew McCabe, Terry Caelli and their co-workers in the late 1990s
\cite{McCabe:2000,10.1007/3-540-44732-6}
showed how harmonic analysis of images with complex pixels could be
understood in terms of elliptical paths in the complex plane
(they called this idea spatiochromatic
image analysis because their complex pixels represented chrominance,
the aspect of a colour image that represents colour).
This idea was extended to three-dimensions (again in the context of colour images)
in a 2007 paper by Todd Ell and the author \cite{10.1109/TIP.2006.884955}.
This paper analysed quaternion Fourier transforms of colour images and showed
that the Fourier domain representation of the image consists of elliptical paths
through the space of the pixel values (colour space).
The present paper extends these ideas to $N$ dimensions,
finds the ellipse parameters,
and shows that the same mathematics applies in any number of dimensions,
not just 2, 3, or 4.
\subsection{Notation}
Throughout this paper we use the following notations:
\begin{itemize}
\item vectors (in an arbitrary number of dimensions, $N>1$) are indicated in bold type,
      as \vector{u};
\item the modulus of a vector is indicated as \modulus{\vector{u}},
      and is the magnitude or length of the vector
      (the square root of the sum of the squared Cartesian components);
\item the inner product of two vectors is indicated as \inner{\vector{u}}{\vector{v}}.
      It is equal to $\modulus{\vector{u}}\,\modulus{\vector{v}}\cos\theta$, where
      $\theta$ is the angle between the two vectors;
\item the norm of a vector is indicated as $\norm{\vector{u}}$, and means the
      square of the magnitude of the vector, that is $\norm{\vector{u}}=\modulus{\vector{u}}^2$.
      The norm may also be computed as $\norm{\vector{u}}=\inner{\vector{u}}{\vector{u}}$.
\end{itemize} 
\section{Elliptical paths}
\label{sec:ellipses}
In this section,
we show that a sum of sinusoidal signals
\emph{of the same frequency}
in $N$ dimensions
takes the form of an elliptical path through the space of the signal values,
that is, the signal values are confined to a plane.
This result holds no matter how many such signals are added,
and irrespective of their relative amplitudes, phases,
and orientations in $N$-dimensional space.
An alternative interpretation is that the fundamental definition of
oscillation at a given frequency, in $N$ dimensions, is an oscillation
along an elliptical path in the space of the signal values,
traversed $\nu$ times per second for a signal with a frequency of $\nu$\,Hz.

We first define what we mean by a sinusoidal signal in an $N$-dimensional Euclidean space.
\begin{definition}
\label{def:shm-nd}
A sinusoidal signal in $N$ dimensions may be represented in the form:
$f(t) = \vector{n}\sin\left(\omega t + \phi\right)$,
where $\vector{n}$ is a vector in the $N$-dimensional space\footnote{In Cartesian coordinates,
for example, $\vector{n}=(n_1, n_2, n_3, \ldots, n_i, \ldots, n_N), n_i\in\R$.},
$\omega$ is the angular frequency of the sinusoid,
and $\phi$ is an initial phase at $t=0$.
$\omega$ and $\phi$ are real.
\end{definition}
The signal $f(t)$ clearly oscillates sinusoidally along a line in $N$-dimensional
space defined by $\vector{n}$.
Note that \vector{n} is not necessarily a unit vector,
hence the amplitude of the oscillation is represented by the modulus of \vector{n}.
A single signal of this form is referred to as \emph{linearly polarized}.
A more general case, however, and the central concern of this paper,
is a superposition of an arbitrary number of signals taking the same form,
but with different parameters
\emph{apart from the frequency},
in particular,
with different directions and amplitudes of oscillation in $N$-dimensional space,
and different initial phases.
We show in Theorem \ref{thm:planar} that such a superposition yields,
in general,
a signal that traces over time an elliptical path \emph{in a plane}
in $N$-dimensional space,
regardless of the value of $N$.
In Theorem \ref{thm:major_minor} we give a parameterisation of the ellipse in terms
of major and minor axes.
\begin{theorem}
\label{thm:planar}
The sum of an arbitrary number, $K$, of signals
as defined in Definition~\ref{def:shm-nd},
with differing amplitudes, phases, and directions of oscillation,
\emph{but the same angular frequency}:
\begin{align}
f(t) &= \sum_{i=1}^{K}\vector{n}_i \sin\left(\omega t + \phi_i\right)\label{eqn:sum}
\intertext{may be expressed as:}
f(t) &= \vector{c}\sin(\omega t) + \vector{s}\cos(\omega t)\label{eqn:sum_cs}
\end{align}
where \vector{c} and \vector{s} are vectors given by:
\[
\vector{c} = \sum_{i=1}^{K}\vector{n}_i\cos\phi_i,\qquad
\vector{s} = \sum_{i=1}^{K}\vector{n}_i\sin\phi_i.
\]
Since the sum oscillates in two directions only,
it always lies within a plane in $N$-dimensional space.
\end{theorem}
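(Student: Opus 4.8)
The plan is to establish the representation~\eqref{eqn:sum_cs} by a direct trigonometric expansion, and then to read off planarity as an immediate corollary.

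First I would apply the angle–addition identity $\sin(\omega t + \phi_i) = \sin(\omega t)\cos\phi_i + \cos(\omega t)\sin\phi_i$ to each term of~\eqref{eqn:sum}, obtaining
\[
f(t) = \sum_{i=1}^{K}\vector{n}_i\bigl(\sin(\omega t)\cos\phi_i + \cos(\omega t)\sin\phi_i\bigr).
\]
Since the sum over $i$ is finite, I can split it into two sums and, because $\sin(\omega t)$ and $\cos(\omega t)$ do not depend on the summation index, pull these scalar factors outside the sums:
\[
f(t) = \Bigl(\sum_{i=1}^{K}\vector{n}_i\cos\phi_i\Bigr)\sin(\omega t) + \Bigl(\sum_{i=1}^{K}\vector{n}_i\sin\phi_i\Bigr)\cos(\omega t).
\]
Identifying the two bracketed vector sums with \vector{c} and \vector{s} gives exactly~\eqref{eqn:sum_cs}. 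This rearrangement is valid componentwise in any Cartesian frame and uses nothing beyond the real vector-space structure of $\R^N$; in particular \vector{c} and \vector{s} are fixed vectors, independent of $t$.

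For the closing assertion I would note that, for every $t$, the value $f(t)$ is a real linear combination of the two fixed vectors \vector{c} and \vector{s}, hence lies in $\operatorname{span}\{\vector{c},\vector{s}\}$, a linear subspace of $\R^N$ of dimension at most two. The whole trajectory $\{f(t) : t\in\R\}$ is therefore contained in that subspace: a plane through the origin when \vector{c} and \vector{s} are linearly independent, and a line or the point $\{\vector{0}\}$ in the degenerate cases. There is essentially no obstacle to overcome here; the only point meriting a word of care is the degenerate case, where ``confined to a plane'' should be understood as ``confined to a subspace of dimension at most two'', together with the fact---deferred to Theorem~\ref{thm:major_minor}---that the curve actually traced within this plane is an ellipse, not merely some planar path.
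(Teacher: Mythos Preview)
Your argument is correct and matches the paper's proof essentially line for line: expand $\sin(\omega t+\phi_i)$ via the addition formula, split the sum, and read off \vector{c} and \vector{s}. You are slightly more explicit than the paper in spelling out why~\eqref{eqn:sum_cs} forces the trajectory into $\operatorname{span}\{\vector{c},\vector{s}\}$ and in flagging the degenerate cases, but the substance is identical.
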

\begin{proof}
Expanding the sine function
in \eqref{eqn:sum} we obtain:
\begin{align}
f(t) &= \sum_{i=1}^{K} \vector{n}_i
        \left[
        \sin(\omega t)\cos\phi_i +
        \cos(\omega t)\sin\phi_i
        \right]\notag\\
     &= \sum_{i=1}^{K}\vector{n}_i\cos\phi_i\sin(\omega t) +
        \sum_{i=1}^{K}\vector{n}_i\sin\phi_i\cos(\omega t)\notag
\end{align}
which gives \eqref{eqn:sum_cs}.
\end{proof}

Notice that,
in this formulation,
the value of the signal at $t=0$ is given by the vector \vector{s}.

Notice that we now know \vector{c} and \vector{s} in terms of the
vectors and phases that define the sum.
Therefore we also know their moduli,
and the cosine of the angle between them from the inner product.
Note that, in general, \vector{c} and \vector{s} are not orthogonal.
Notice also that the two vectors in this result are
\emph{not} necessarily the major and minor axes of the ellipse,
and although they are sufficient to parameterise the ellipse,
they are not a convenient parameterisation.
A better parameterisation would use two vectors aligned along
the major and minor axes and this is what we seek next,
since it expresses the oscillation in the plane in terms
of two \emph{perpendicular} oscillations,
such that one of them has the largest possible amplitude.
Let the vector \vector{a} be aligned along the major axis of the ellipse,
and the vector \vector{b} be aligned perpendicular to \vector{a} along the minor axis.
We must find \vector{a} and \vector{b} in terms of \vector{c} and \vector{s}.

\begin{theorem}
\label{thm:major_minor}
The sum of an arbitrary number, $K$, of signals as defined in Theorem \ref{thm:planar}
may be expressed in terms of the major and minor axes of the ellipse as:
\begin{equation}
f(t) = \vector{a}\sin(\omega t + \psi) + \vector{b}\cos(\omega t + \psi)\label{eq:theoremab}
\end{equation}
where \vector{a} and \vector{b} are orthogonal vectors defining
the major and minor axes of the ellipse respectively
(that is $\vector{a}\perp\vector{b}$, and $\norm{\vector{a}}-\norm{\vector{b}}\ge0$),
and $\psi$ is a phase that is to be determined.

The vectors \vector{a} and \vector{b} are given by:
\begin{align}
\vector{a} &= \phantom{-}\vector{c}\cos\psi + \vector{s}\sin\psi\label{eq:a},\\
\vector{b} &=          - \vector{c}\sin\psi + \vector{s}\cos\psi\label{eq:b},
\end{align}
and $\psi$ is given by:
\begin{equation}
\frac{1}{2}\tan2\psi = \frac{\inner{\vector{c}}{\vector{s}}}{\norm{\vector{c}}-\norm{\vector{s}}}\label{eq:psi}.
\end{equation}
\end{theorem}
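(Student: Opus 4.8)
The plan is to treat \eqref{eq:theoremab} as an ansatz with unknowns \vector{a}, \vector{b} and $\psi$, and to determine them by forcing consistency with \eqref{eqn:sum_cs} together with the orthogonality requirement $\vector{a}\perp\vector{b}$. First I would expand $\sin(\omega t+\psi)$ and $\cos(\omega t+\psi)$ by the angle-sum identities and collect the coefficients of $\sin(\omega t)$ and $\cos(\omega t)$:
\[
f(t) = \bigl(\vector{a}\cos\psi - \vector{b}\sin\psi\bigr)\sin(\omega t) + \bigl(\vector{a}\sin\psi + \vector{b}\cos\psi\bigr)\cos(\omega t).
\]
Since $\sin(\omega t)$ and $\cos(\omega t)$ are linearly independent functions of $t$, comparison with \eqref{eqn:sum_cs} forces $\vector{c} = \vector{a}\cos\psi - \vector{b}\sin\psi$ and $\vector{s} = \vector{a}\sin\psi + \vector{b}\cos\psi$; this is a plane rotation of the pair $(\vector{a},\vector{b})$ through the angle $\psi$, and inverting it yields exactly \eqref{eq:a} and \eqref{eq:b}. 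Crucially this holds for \emph{every} $\psi$, so $\psi$ remains a free parameter which we may now spend on making \vector{a} and \vector{b} perpendicular.

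Next I would impose $\inner{\vector{a}}{\vector{b}}=0$. Substituting \eqref{eq:a} and \eqref{eq:b} and expanding by bilinearity of the inner product, the $\norm{\vector{c}}$ and $\norm{\vector{s}}$ terms collect with a factor $\sin\psi\cos\psi=\tfrac12\sin2\psi$, while the $\inner{\vector{c}}{\vector{s}}$ terms collect with $\cos^2\psi-\sin^2\psi=\cos2\psi$, giving
\[
\inner{\vector{a}}{\vector{b}} = \tfrac12\bigl(\norm{\vector{s}}-\norm{\vector{c}}\bigr)\sin2\psi + \inner{\vector{c}}{\vector{s}}\cos2\psi.
\]
Setting the right-hand side to zero and rearranging produces \eqref{eq:psi}. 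It then remains to choose the correct root and to check that \vector{a} and \vector{b} really do lie along the axes. Equation \eqref{eq:psi} fixes $2\psi$ only modulo $\pi$, hence $\psi$ only modulo $\pi/2$; the two admissible values within a half-turn differ by $\pi/2$ and, as a direct substitution shows, merely interchange the roles of \vector{a} and \vector{b} (up to an irrelevant sign change), so one takes the root for which $\norm{\vector{a}}\ge\norm{\vector{b}}$. (In the circular-polarization case $\norm{\vector{c}}=\norm{\vector{s}}$ with $\inner{\vector{c}}{\vector{s}}=0$, \eqref{eq:psi} is vacuous and any $\psi$ serves; the linear-polarization case, where \vector{c} and \vector{s} are parallel or one of them vanishes, is covered the same way, now with $\norm{\vector{b}}=0$.) Finally, because $\vector{a}\perp\vector{b}$, the squared distance from the origin of the moving point $\vector{a}\sin(\omega t+\psi)+\vector{b}\cos(\omega t+\psi)$ is $\norm{\vector{a}}\sin^2(\omega t+\psi)+\norm{\vector{b}}\cos^2(\omega t+\psi)$, whose maximum $\modulus{\vector{a}}$ is attained along $\pm\vector{a}$ and whose minimum $\modulus{\vector{b}}$ along $\pm\vector{b}$; hence these vectors lie along the major and minor axes respectively, in that order by the choice of root.

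I expect the routine trigonometric bookkeeping — the angle-sum expansions and the collecting of terms — to present no real difficulty. The part that needs care is the analysis of the root of \eqref{eq:psi}: verifying that the two solutions in a half-turn simply swap \vector{a} and \vector{b}, disposing cleanly of the degenerate cases (circular polarization, where the denominator in \eqref{eq:psi} vanishes, and linear polarization, where \vector{c} and \vector{s} are collinear), and supplying the short geometric argument that an orthogonal sine/cosine decomposition is automatically aligned with the principal axes of the ellipse.
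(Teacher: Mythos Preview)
Your derivation of \eqref{eq:a}, \eqref{eq:b} and \eqref{eq:psi} is essentially identical to the paper's: expand the angle sums, match coefficients against \eqref{eqn:sum_cs}, invert the resulting rotation, and impose $\inner{\vector{a}}{\vector{b}}=0$ to pin down $2\psi$. The paper does exactly this, writing the inversion explicitly as a $2\times2$ rotation matrix.

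Where you diverge is in establishing $\norm{\vector{a}}\ge\norm{\vector{b}}$. You observe that \eqref{eq:psi} fixes $\psi$ only modulo $\pi/2$, note that the two candidate values swap $\vector{a}$ and $\vector{b}$ up to sign, and then \emph{choose} the root for which $\norm{\vector{a}}\ge\norm{\vector{b}}$; you also supply the extremum argument that an orthogonal in-quadrature pair is automatically aligned with the principal axes, which the paper asserts but does not prove. The paper instead computes $\norm{\vector{a}}-\norm{\vector{b}}=\bigl(\norm{\vector{c}}-\norm{\vector{s}}\bigr)/\cos2\psi$ and runs a four-quadrant sign analysis (treating $2\psi$ as determined by an \texttt{atan2} of $\inner{\vector{c}}{\vector{s}}$ and $\norm{\vector{c}}-\norm{\vector{s}}$) to show this quantity is never negative. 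The practical difference matters for the paper's purposes: its abstract advertises that the major axis is obtainable \emph{algebraically, without comparisons of magnitude}, and the sign-table argument is precisely what delivers that---your root-selection step reintroduces a magnitude comparison. Your argument is correct and arguably cleaner as pure mathematics, but it does not recover this algorithmic point, which is the paper's emphasis.
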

The vectors $\vector{a}$ and $\vector{b}$, and the phase $\psi$ are given here
in terms of the vectors $\vector{c}$ and $\vector{s}$ in Theorem \ref{thm:planar}
and hence are determined uniquely by the parameters of the original vectors in
\eqref{eqn:sum}, namely the directions of oscillation $\vector{n}_i$ and the
phases $\phi_i$.

Notice that in contrast to the formulation in Theorem \ref{thm:planar},
in this case \vector{a} and \vector{b} are orthogonal by definition,
and the sine and cosine are in quadrature because of the common phase
$\psi$.
The vectors \vector{a} and \vector{b} must be the major and minor axes of the ellipse
because the sine and cosine are in quadrature,
but it is not obvious that \vector{a} is the major axis,
however we demonstrate in the proof that this is always so.
\begin{proof}
We show how to construct \vector{a} and \vector{b} by finding $\psi$,
subject to the constraint $\vector{a}\perp\vector{b}$.
We then show that the result satisfies $\norm{\vector{a}}-\norm{\vector{b}}\ge0$.

Expand the cosine and sine
in \eqref{eq:theoremab}:
\begin{align}
f(t) & = 
\begin{gathered}[t]
\phantom{+}\,\vector{a}\left(\sin(\omega t)\cos\psi + \cos(\omega t)\sin\psi\right)\notag\\
         + \,\vector{b}\left(\cos(\omega t)\cos\psi - \sin(\omega t)\sin\psi\right)
\end{gathered}
\intertext{Regrouping the terms we obtain:}
&=
\begin{gathered}[t]
\phantom{+}\,(\vector{a}\cos\psi - \vector{b}\sin\psi)\sin(\omega t)\notag\\
         + \,(\vector{a}\sin\psi + \vector{b}\cos\psi)\cos(\omega t)
\end{gathered}
\end{align}
Comparing this result with \eqref{eqn:sum_cs}, we find that:
\begin{align*}
\vector{c} &= \vector{a}\cos\psi - \vector{b}\sin\psi\\
\vector{s} &= \vector{a}\sin\psi + \vector{b}\cos\psi
\end{align*}
We can write this in matrix-vector form as:
\[
\begin{pmatrix}
\cos\psi &          - \sin\psi\\
\sin\psi & \phantom{-}\cos\psi
\end{pmatrix}
\begin{pmatrix}
\vector{a}\\
\vector{b}
\end{pmatrix}
=
\begin{pmatrix}
\vector{c}\\
\vector{s}
\end{pmatrix}
\]
and we recognise the matrix as an orthogonal \emph{rotation matrix}%
\footnote{The rotation is not in the plane of the ellipse in signal space,
but in the space of the vectors
$\begin{pmatrix}\vector{a} & \vector{b}\end{pmatrix}^T$
and
$\begin{pmatrix}\vector{c} & \vector{s}\end{pmatrix}^T$.}
with unit determinant.
Hence we can express the orthogonal vectors \vector{a} and \vector{b}
in terms of the non-orthogonal vectors \vector{c} and \vector{s} as follows:
\[
\begin{pmatrix}
\vector{a}\\
\vector{b}
\end{pmatrix}
=
\begin{pmatrix}
\phantom{-}\cos\psi & \sin\psi\\
         - \sin\psi & \cos\psi
\end{pmatrix}
\begin{pmatrix}
\vector{c}\\
\vector{s}
\end{pmatrix}
\]
which gives \eqref{eq:a} and \eqref{eq:b}.

It remains to find $\psi$,
for which we make use of the properties of the inner product \cite{Renze:innerproduct},
in particular:
\begin{align}
\inner{\vector{u}+\vector{v}}{\vector{w}}&=\inner{\vector{u}}{\vector{w}} +
                                           \inner{\vector{v}}{\vector{w}}\label{eq:innermost}\\
\inner{\alpha\vector{u}}{\vector{v}}     &=\alpha\inner{\vector{u}}{\vector{v}}\label{eq:inner scalar}\\
\inner{\vector{u}}{\vector{v}}           &=\inner{\vector{v}}{\vector{u}}\label{eq:innercommute}
\end{align}
where \vector{u}, \vector{v} and \vector{w} are vectors, and $\alpha$ is a scalar.

Since we have defined \vector{a} and \vector{b} to be orthogonal,
we know that $\inner{\vector{a}}{\vector{b}}=0$,
and hence from \eqref{eq:a} and \eqref{eq:b} we have:
\begin{equation}\label{eq:orthogonality}
\inner{ \vector{c}\cos\psi + \vector{s}\sin\psi}%
      {-\vector{c}\sin\psi + \vector{s}\cos\psi}
= 0
\end{equation}
Applying \eqref{eq:innermost} to \eqref{eq:orthogonality}, we obtain:
\begin{align*}
&\inner{\vector{c}\cos\psi}{-\vector{c}\sin\psi + \vector{s}\cos\psi}\\
+
&\inner{\vector{s}\sin\psi}{-\vector{c}\sin\psi + \vector{s}\cos\psi} = 0
\end{align*}
and making use of \eqref{eq:innercommute}
we can then use \eqref{eq:innermost} again on each of these inner products, giving:
\begin{align*}
-&\inner{\vector{c}\cos\psi}{\vector{c}\sin\psi}
+ \inner{\vector{s}\sin\psi}{\vector{s}\cos\psi}\\
+&\inner{\vector{c}\cos\psi}{\vector{s}\cos\psi}
- \inner{\vector{s}\sin\psi}{\vector{c}\sin\psi} = 0
\end{align*}
Factoring out the scalars using \eqref{eq:inner scalar},
and noting that $\inner{\vector{u}}{\vector{u}}=\norm{\vector{u}}$:
\begin{gather*}
\sin\psi\cos\psi\left(\norm{\vector{s}}-\norm{\vector{c}}\right)\\
+
\left(\cos^2\psi-\sin^2\psi\right)\inner{\vector{c}}{\vector{s}} = 0
\end{gather*}
Finally, the double angle formulae
reduce this to:
\begin{equation*}
\frac{1}{2}\sin2\psi
\left(\norm{s}-\norm{c}\right) + \cos2\psi\inner{\vector{c}}{\vector{s}} = 0
\end{equation*}
Re-arranging, we obtain \eqref{eq:psi}, from which $\psi$ can be found from
\vector{c} and \vector{s}.

The second part of the proof shows that \vector{a} is always the major
axis of the ellipse by demonstrating that the sign of
$\norm{\vector{a}}-\norm{\vector{b}}$ is never negative.

Applying the cosine rule\footnote{In vector form: $\norm{\vector{u}} = \norm{\vector{v}} + \norm{\vector{w}} - 2\inner{\vector{v}}{\vector{w}}$, \cite{cosine_law}.} on \eqref{eq:a} we get:
\begin{align}
\norm{\vector{a}} &= \norm{\vector{c}}\cos^2\psi
                   + \norm{\vector{s}}\sin^2\psi + 2\inner{\vector{c}}{\vector{s}}\sin\psi\cos\psi\notag\\
                  &= \norm{\vector{c}}\cos^2\psi
                   + \norm{\vector{s}}\sin^2\psi + \inner{\vector{c}}{\vector{s}}\sin2\psi\notag
\intertext{and on \eqref{eq:b} we get:}
\norm{\vector{b}} &= \norm{\vector{s}}\cos^2\psi
                   + \norm{\vector{c}}\sin^2\psi - 2\inner{\vector{c}}{\vector{s}}\sin\psi\cos\psi\notag\\
                  &= \norm{\vector{s}}\cos^2\psi
                   + \norm{\vector{c}}\sin^2\psi - \inner{\vector{c}}{\vector{s}}\sin2\psi\notag
\end{align}
Taking the difference $\norm{\vector{a}}-\norm{\vector{b}}$ we obtain:
\begin{gather}
\left(\norm{\vector{c}}-\norm{\vector{s}}\right)\left(\cos^2\psi-\sin^2\psi\right)
+ 2\inner{\vector{c}}{\vector{s}}\sin2\psi\notag\\
=\left(\norm{\vector{c}}-\norm{\vector{s}}\right)\cos2\psi
+ 2\inner{\vector{c}}{\vector{s}}\sin2\psi\label{eq:diff}
\end{gather}
From \eqref{eq:psi}, we can obtain an expression for the inner product of \vector{c} and \vector{s}
in terms of $\psi$ and the norms of \vector{c} and \vector{s}:
\begin{align}
\inner{\vector{c}}{\vector{s}}
&= \left(\norm{\vector{c}}-\norm{\vector{s}}\right)\frac{1}{2}\tan 2\psi\notag\\
&= \left(\norm{\vector{c}}-\norm{\vector{s}}\right)\frac{\sin 2\psi}{2\cos 2\psi}\notag
\end{align}
Substituting this result into \eqref{eq:diff} we find:
\begin{align}
\norm{\vector{a}}-\norm{\vector{b}} &=
\left(\norm{\vector{c}}-\norm{\vector{s}}\right)
\left(\cos2\psi + \frac{\sin^2 2\psi}{\cos2\psi}\right)\notag\\
&=
\left(\norm{\vector{c}}-\norm{\vector{s}}\right)
\left(\frac{\cos^2 2\psi+\sin^2 2\psi}{\cos2\psi} \right)\notag\\
&=\left(\norm{\vector{c}}-\norm{\vector{s}}\right)/\cos2\psi\label{eq:absign}
\end{align}
The sign of the right-hand side is determined by the sign of $\norm{\vector{c}}-\norm{\vector{s}}$,
which is arbitrary\footnote{Of course it depends on the directions of the vectors $\vector{n}_i$,
and the phases $\phi_i$ in \eqref{eqn:sum}, but what we mean here is that either sign is possible,
depending on these quantities.}
depending on the relative magnitudes of the two vectors \vector{c} and \vector{s};
and on the sign of $\cos2\psi$, which is not arbitrary,
since it depends on the sign of $\norm{\vector{c}}-\norm{\vector{s}}$
and the sign of \inner{\vector{c}}{\vector{s}}, which in turn depends on the angle
between \vector{c} and \vector{s}, but not the relative magnitudes of the two vectors.

Thus we have to consider four cases,
corresponding to the quadrants in which $2\psi$ lies according to \eqref{eq:psi}.
\begin{table}[t]
\caption{\label{tab:signs}Determination of the sign of $\cos2\psi$.}
\begin{center}
\begin{tabular}{c|c||l||c}
$\norm{\vector{c}}-\norm{\vector{s}}$
& $\inner{\vector{c}}{\vector{s}}$ & Quadrant & $\cos2\psi$\\
\hline
$+$ & $+$ & First & $+$\\
$-$ & $+$ & Second & $-$\\
$-$ & $-$ & Third & $-$\\
$+$ & $-$ & Fourth & $+$\\
\hline
\end{tabular}
\end{center}
\end{table}
Table \ref{tab:signs} shows the four possible cases for the signs of the two quantities
that determine $2\psi$, and the quadrants of the plane in which each case occurs.
Knowing the quadrant in which $2\psi$ lies gives us the sign of $\cos2\psi$, as shown.

Notice that the sign of $\cos2\psi$ is the same as the sign of $\norm{\vector{c}}-\norm{\vector{s}}$,
and hence the sign of the right-hand side in \eqref{eq:absign} is never negative.
Therefore the sign of $\norm{\vector{a}}-\norm{\vector{b}}$ is never negative,
and this shows that $\vector{a}$ is never the minor axis of the ellipse,
\ie \vector{a} is always the major axis (except when $\norm{\vector{a}}=\norm{\vector{b}}$
and there is no major axis).
\end{proof}
The result that \vector{a} is never the minor axis of the ellipse is somewhat surprising,
since the parameters of the sinusoids that are summed to produce the ellipse are arbitrary.
However, the result always produces a vector \vector{a} which is the major axis of the ellipse
(except of course in the case when the ellipse degenerates to a circle).

Note that computationally,
the angle $\psi$ should be computed using an \texttt{atan2} function,
not by actually dividing the inner product by the difference of the norms as given
in \eqref{eq:psi}, in order to obtain a result which takes account of the signs of both.
Since the \texttt{atan2} function will return an angle in any of the four quadrants,
it follows from the factor of two in \eqref{eq:psi} that the value of $\psi$
is in the right half-plane, \ie $-\pi/2\le\psi\le\pi/2$.

\section{Simulation example}

\begin{figure}
\centerline{\includegraphics[width=0.75\textwidth]{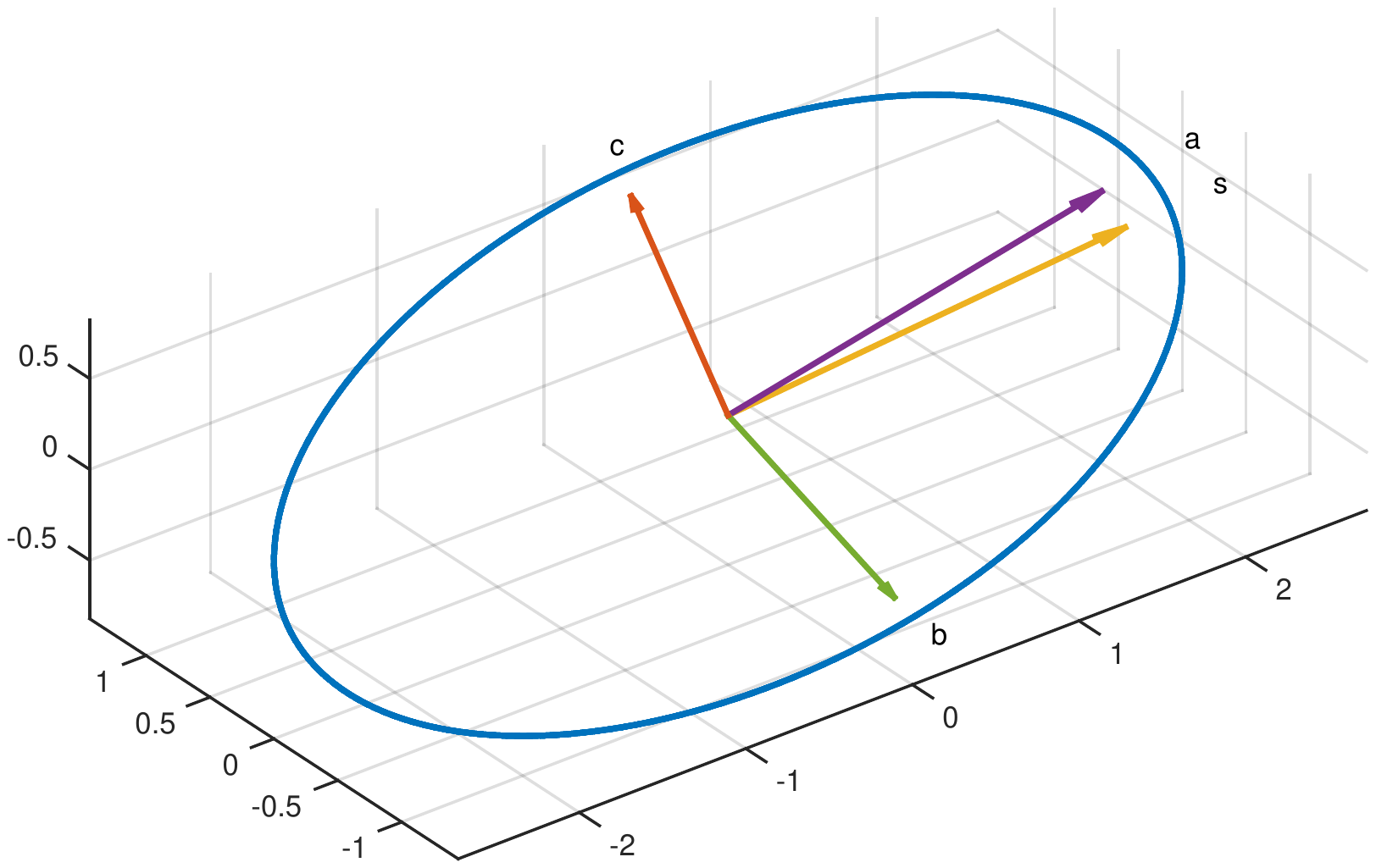}}
\caption{\label{fig:ellipse}Ellipse showing the four vectors: %
\vector{a} and \vector{b} (major and minor axes); %
\vector{c} and \vector{s} (cosine and sine vectors).
Vectors \vector{a} and \vector{b} are always orthogonal by definition.
There is no specific angular relationship between \vector{c} and \vector{s}.}
\end{figure}

\begin{figure}
\centerline{\includegraphics[width=0.75\textwidth]{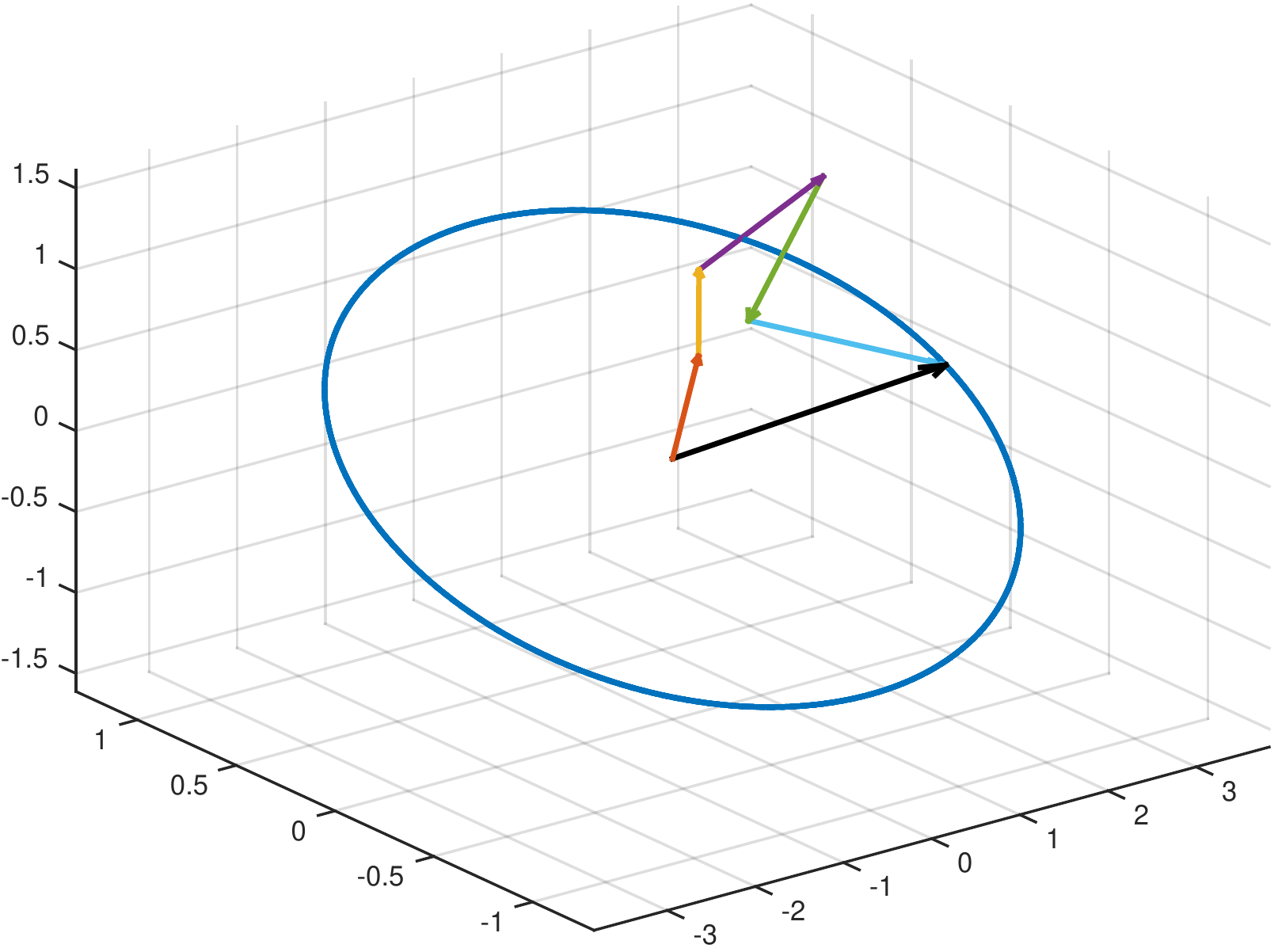}}
\caption{\label{fig:still}Still frame from the movie, showing the ellipse, five randomly chosen vectors
(in colour) and the resultant vector tracing out the ellipse (black).}
\end{figure}

We present a simulation example in the form of a \textsc{matlab} script that creates a movie/animation
in three dimensions, using five vectors with
randomly chosen directions and amplitudes, each scaled by a sine function of the
same frequency with randomly chosen phases, as in Theorem \ref{thm:planar}.
The choice of three dimensions was made simply because an easily understandable 3-D plot
is possible.
The \textsc{matlab} script (\texttt{ellipse.m}) is available for download as an ancillary file to this paper.

The first output of the simulation is a plot of the ellipse, showing the \vector{c} and
\vector{s} vectors of Theorem \ref{thm:planar} and the \vector{a} and \vector{b}
vectors of Theorem \ref{thm:major_minor}.
An example of this plot is shown in Figure \ref{fig:ellipse}.
The second output of the simulation is a movie/animation showing the five vectors and their resultant
sum, through one complete cycle of oscillation.
Figure \ref{fig:still} shows a still image from the animation.
Each one of the five vectors always points in the same direction (apart from reversal due to sign changes),
but their lengths scale sinusoidally, each with a different initial phase.
The resultant from summing the five vectors traces out the ellipse.

The simulation run presented is the result from the seventh run of the script after starting
\textsc{matlab} R2015a (or any version with the same random number generator).
Each run of the script will produce a different plot and movie ---
the ones presented were chosen because the plot shows clearly separated vectors
and the five vectors in the movie are nicely and clearly seen (the plot and movie
resulted from the same simulation run of course).

\section{Fourier analysis of vector signals}
\label{sec:ramifications}
In this section we consider the implications of the preceding material for
the analysis of vector signals into frequency components.
This discussion points the way to further work that can be done in the field of
Fourier transforms of vector signals and images,
particularly with regard to the interpretation of the Fourier domain representation
of a signal or image,
and in the construction of Fourier transforms for dimensions $N>2$.

The mathematical analysis presented in \S\,\ref{sec:ellipses} shows that the ellipse
resulting from the summation of an arbitrary number of sinusoids,
each of arbitrary amplitude and phase,
and each oscillating along an arbitrary direction in $N$-dimensional space,
is \emph{canonic},
that is only one ellipse can result from a summation of given sinusoids\footnote{The
converse is clearly not true since the same ellipse can fairly obviously result in more
than one way from a summation of sinusoids.}.
However, for $N>2$,
there is more than one way to define a Fourier transform,
and therefore the representation of the signal in the Fourier domain is not unique,
as it is in the case $N=2$ as represented,
for example,
by the classical complex Fourier transform.

A Fourier transform does not result in a representation directly in terms
of elliptical paths through the space of the vector values however,
even in the classical complex case.
Instead, the Fourier domain representation consists of values that
\emph{modify} a pair of exponentials
(with positive and negative frequencies)
that sum to produce the ellipse.
To make this clearer,
let us consider the classical complex Fourier transform in its discrete form
(the principles that concern us here work in the same way in the continuous case,
but are not so easily described).
A discrete Fourier transform pair may be written as:
\begin{align}
F[u] &= \frac{1}{\sqrt{M}}\sum_{m=0}^{M-1}f[m]\exp\left(          -\I 2\pi\frac{mu}{M}\right)\label{eq:dft}\\
f[m] &= \frac{1}{\sqrt{M}}\sum_{u=0}^{M-1}F[u]\exp\left(\phantom{-}\I 2\pi\frac{mu}{M}\right)\label{eq:idft}
\end{align}
where $\I$ is the imaginary root of $-1$,
$f[m]$ is a real or complex-valued discrete-time signal with $M$ samples,
$F[u]$ is complex valued, also with $M$ samples.

The complex exponential function has a complex value that traces a
unit circle in the complex plane,
as its argument varies from $0$ to $2\pi$.
In the discrete Fourier transform,
$M$ of these exponentials are summed,
each representing one possible frequency.
Half of the exponentials represent negative frequencies and half represent
positive frequencies, corresponding to the two possible senses of rotation.

Now consider how the coefficients $F[u]$ represent the signal in the Fourier domain.
Consider the `inverse' transform \eqref{eq:idft},
that reconstructs $f[m]$ from its frequency domain representation.
The frequency domain coefficients $F[u]$ occur in pairs,
corresponding to positive and negative frequencies,
(with the exception of the zero frequency coefficient $F[0]$,
and the Nyquist coefficient $F[M/2]$ --- which is absent if $M$ is odd).
Each pair of coefficients `scales' a pair of exponentials in both amplitude and phase.
The sum of the scaled and phase-shifted negative and positive frequency exponentials
is an ellipse of a given frequency as described in \S\,\ref{sec:ellipses}.
In the case where $f[m]$ is real, of course, the sum of the negative
and positive frequency exponentials is a degenerate ellipse oscillating along the
real axis (put another way, the imaginary components of the scaled and phase
shifted exponentials cancel out when summed).
The nature of the `scaling' in the complex case is that the amplitude of the
coefficient scales the amplitude of the exponential,
and the phase of the coefficient adds to the phase of the exponential.
(This is easily seen in polar form, of course.)

To construct Fourier transforms for the cases where $N>2$,
the nature of the `scaling' has to change,
because,
as we have seen in \S\,\ref{sec:ellipses},
we have to be able to construct an ellipse oriented in an arbitrary plane.
If we follow the example of the complex transform discussed above,
we need some way to construct an exponential with vector values in $N$ dimensions,
and we need to define multiplication to implement the `scaling' operation
such that we can modify the amplitude, phase,
\emph{and orientation}
of the circular path in $N$-dimensional space
represented by the exponential.
Two known ways to do this are:
\begin{itemize}
\item hypercomplex algebras
      (see, for example \cite{QCFTW:Brackx} for a historical overview of this topic);
\item matrix exponentials \cite{10.1016/j.amc.2012.06.055}.
\end{itemize}
We will discuss each of these in turn.

Much work has been done on \emph{hypercomplex} Fourier transforms using an
$N$-dimensional hypercomplex algebra.
In these algebras, which only exist in 
dimensions $N$ which are powers of two,
multiplication is often non-commutative
(where this is not the case,
other awkward properties occur,
such as the existence of divisors of zero).
In hypercomplex Fourier transforms,
elements of the algebra represent signal values,
and hypercomplex exponentials based on a square root of $-1$ in the algebra
generalise the concept of the complex exponential discussed above,
giving a circular path in an arbitrary plane defined by the orientation of
the square root of $-1$.
This relies on the generalisation of Euler's formula
$\exp\I\theta=\cos\theta+\I\sin\theta$, $\I^2=-1$,
which applies for complex numbers,
to a more general case $\exp\lozenge\theta=\cos\theta+\lozenge\sin\theta$
where $\lozenge$ represents a square root of $-1$, that is $\lozenge^2=-1$,
and the exponential function is defined for an
argument consisting of $\lozenge$ multiplied by a real scalar $\theta$.
In the case of hypercomplex algebras, $\lozenge$ would be an element of the algebra
(for examples, see \cite{10.1007/s00006-006-0005-8,QCFTW:7}),
but in general it could be something else, for example a matrix as in \cite{10.1016/j.amc.2012.06.055}.
The `scaling' concept then includes
(but often not in an easily understood manner)
a change of orientation of the exponential as discussed above,
in order to construct an arbitrarily-oriented ellipse in $N$ dimensions
by adding two (or more) exponentials rotating in opposite senses.
Non-commutative multiplication means that variants of the classical complex
Fourier transform can be constructed with the ordering of the exponential
and the signal reversed, giving slightly different results.
But more significantly, it is possible to define transforms with more than
one hypercomplex exponential
(for example,
one each side of the signal,
or two different exponentials on the same side,
or multiple different exponentials on each side).
For a discussion of many possibilities, see \cite{QCFTW:8}.
Interpretation of a transform in which exponentials are arranged on both
sides of the signal function is not simple.
Unlike our previous analysis where we considered the Fourier coefficients
as scale factors for the exponential,
it now makes more sense to consider the exponentials as \emph{operators}
on the Fourier coefficients,
an approach which we will not develop further here.

A second known way to construct a Fourier transform for signals with
$N$-dimensional vector values is to use matrix exponentials
\cite{10.1016/j.amc.2012.06.055}.
In this approach, the signal values may be represented by \emph{vectors} in the linear algebra sense
(that is a degenerate matrix with one row or column),
and the exponential by a matrix exponential with a matrix root of $-1$,
that is a matrix that squares to give a negated identity matrix.
The formulation is as given in \eqref{eq:dft} except that $\I$ is replaced by a matrix.
Unlike the case of hypercomplex algebras,
where the dimension $N$ is limited to powers of two,
real matrix roots of $-1$ exist for other even values of $N$
(but not for odd values of $N$ \cite[Theorem 2, p\,651]{10.1016/j.amc.2012.06.055}).
An intriguing aspect of this approach is that the matrix roots of $-1$ may be based
on matrix representations of hypercomplex algebras, in which case the transform is
numerically equivalent to a hypercomplex transform; or they may be arbitrary roots
of $-1$ that do not correspond to a hypercomplex algebra.
An example was given in \cite[\S\,5]{10.1016/j.amc.2012.06.055} where the matrix
exponential represents an elliptical path in 2-dimensions, thus making possible
a Fourier transform based inherently on ellipses rather than circles.
Further study of this topic is clearly merited, but it depends on first researching
the topic of matrix roots of $-1$ in a more general way than has so far been done.

\section{Conclusions}

The main result in this paper, presented in Theorem \ref{thm:planar}, is that any
vector signal in $N$-dimensions with a single frequency must oscillate along an
elliptical path, and therefore be confined to a plane.
This result has been demonstrated by synthesis: taking a sum of $K$ arbitrary signals
\emph{of the same frequency} $\nu$\,Hz
oscillating sinusoidally in arbitrary and different directions,
with arbitrary and different amplitudes and phases,
and showing that the resultant signal has values that traverse an elliptical path
in the $N$-dimensional signal space with a frequency of $\nu$ cycles of the path
per second.

The ramifications of this result are significant in the study and development of Fourier
transforms for $N$-dimensional vector signals, since each frequency component
of the signal, as analyzed by a Fourier transform, must oscillate in a plane.
This oscillation can always be produced by the superposition of two counter-rotating
exponentials, which shows that a one-sided Fourier transform is always sufficient.
More complicated formulations of Fourier transforms with multiple exponentials
will not yield a more complicated analysis of the signal in terms of frequency
components.

Clearly therefore, some future work is needed to revisit some of the ideas of
hypercomplex, Clifford, and other Fourier transforms applicable to vector signals, in
particular to consider how each type of transform represents the frequency content
of the signal in terms of the elliptical paths described in this paper.

\bibliographystyle{unsrtnat}
\bibliography{IEEEabrv,sangwine,maths,image_processing,paper}
\end{document}